\documentclass[a4paper,12pt]{article}
\usepackage{setspace,url, graphicx, float, subcaption, caption,amsmath}
\usepackage[top=2.5cm, bottom=2.5cm, right=2.5cm, left=2.5cm]{geometry}
\usepackage[hidelinks]{hyperref}
\usepackage{fancyhdr}
\usepackage{tabu}
\usepackage{ amssymb }
\usepackage{amsthm}
\usepackage{imakeidx}
\usepackage{titletoc}
\usepackage{xcolor}

\newcommand{\Cay}{\mbox{Cay}}
\newcommand{\Aut}{\mbox{Aut}}

\newtheorem{thm}{Theorem}[subsection]
\newtheorem{thms}{Theorem}[section]
\newtheorem{dfn}{Definition}[subsection]

\newtheorem{lems}{Lemma}[section]

\begin{document}
\title{Further Applications of Schur Rings to Produce GRRs for Dihedral Groups.}
\author{Jonathan Ebejer, Josef Lauri \\University of Malta}

\maketitle

\textbf{Abstract:} \emph{This note is a continuation of postgraduate thesis research carried out by the first author under the supervision of the second author at the University of Malta. In that research we took a look at several results relating Schur rings to sufficient conditions for GRRs and then applied those results to produce numerical methods for constructing trivalent GRRs for dihedral groups very quickly.}

\emph{Here we seek to slightly alter the approach used in the original research to produce another numerical method for producing other GRRs for dihedral groups. Moreover, this will hopefully serve as companion literature for the original research when published.}
	
\section{Foundations}

The background needed to understand the proof below would be quite familiar to abstract algebraists and algebraic graph theorist and is given a more thorough treatment in the original postgraduate research \cite{Eb} and the work which was uploaded to arXiv \cite{Eb&La}. Still, we make a few critical definitions below and cite theorems and lemmas which will be needed to prove the main theorem of this work. The proofs for these lemmas and theorems have been omitted but are available at the cited sources.

Let us start with a few definitions:

\vspace{4mm}

\begin{dfn}[Cayley graphs]
The \textbf{Cayley graph} of a group $\Gamma$ is the graph, denoted by $\Cay(\Gamma, S)$, whose vertex-set is $\Gamma$ and two vertices $u$ and $v$ are adjacent if $v=us$ where $s\in S$ and $S$ is a subset of $\Gamma$ such that $1\not\in S$, $S$ generates $\Gamma$, and $S^{-1}=S$. We call the set $S$ the \textbf{connecting set} of the Cayley graph.
\end{dfn}

\vspace{2mm}

$\mathbf{Schur\ Rings}$ A subring $\mathcal{S}$ of the group ring $\mathbb{Z}[\Gamma]$ is called a Schur ring or an $\mathcal{S}$-ring over $\Gamma$, of rank $r$ if the following conditions hold:
\begin{itemize}
	\item $\mathcal{S}$ is closed under addition and multiplication including multiplication by elements of  $\mathbb{Z}$ from the left (i.e. $\mathcal{S}$ is a $\mathbb{Z}[\Gamma]$-module).
	\item Simple quantities $\overline{B}_{0},\overline{B}_{1},...,\overline{B}_{r-1}$ exist in $\mathcal{S}$ such that every element $\overline{C} \in \mathcal{S}$ has a unique representation:
	\[\overline{C}=\sum_{i=0}^{r-1}\beta^{i}\overline{B}_{i}\] where $\beta^i \geq 0$.
	\item $\overline{B}_{0}=\overline{1}$.
	\item $\displaystyle \sum_{i=0}^{r-1}\overline{B}_{i}=\overline{\Gamma}$, that is, $\lbrace B_{0},B_{1},...,B_{r-1}\rbrace$ is a partition of $\Gamma$.
	\item For every $i\in \lbrace 0,1,2,...,r-1 \rbrace$ there exists a $j \in \lbrace 0,1,2,...,r-1 \rbrace$ such that $\overline{B}_{j}=\overline{B}^{-1}_{i} (=\overline{\lbrace b^{-1} : b \in B_{i}} \rbrace )$.
\end{itemize}

We call the set of simple quantities $\overline{B}_{0},\overline{B}_{1},\ldots,\overline{B}_{r-1}$the \textbf{basis} of the Schur ring and we denote it by ${\mathcal B}[{\mathcal S}]$. Each simple quantity $\overline{B}_i$ of the basis is referred to as  a \textbf{basic element} of the Schur ring. Sometimes we need to refer to the set $B_i$ which we call a \textbf{basic set}. The Cayley graphs formed used the basic sets of a Schur ring as the connecting sets are referred to as the \textbf{basic Cayley graphs} of the Schur ring.

Let $C$ be a subset of the group $\Gamma$. Then $\langle\langle C \rangle\rangle$ is the smallest (coarsest) Schur ring of $\Gamma$ containing $\overline{C}$.

\vspace{4mm}

$\mathbf{ Structure\ Constants}$ Let $\overline{B}_i$ and $\overline{B}_j$ be two basis elements of an r-rank Schur ring $\mathcal{S}$. For all values $i, j, k \in [r]$ there exist non-negative integers $\beta_{ij}^{k}$ called structure constants, such that
\[\overline{B}_{i}\cdot\overline{B}_{j}=\sum_{k=1}^{r} \beta_{ij}^{k}\overline{B}_{k}\]

\vspace{4mm}

$\mathbf{ Automorphism\ groups\ of\ Schur\ rings}$ The automorphism group of a Schur ring is defined to be the intersection of all automorphism groups of the basic Cayley graphs of the Schur ring. For any Schur ring $\mathcal{S}$ we will denote its automorphism group as $Aut(\mathcal{S})$.

\vspace{4mm}
	
The ``largest'' Schur ring of a group $\Gamma$ is defined to be the finest partition of $\Gamma$, that is, the basic sets are all the singleton elements of $\Gamma$. The ``smallest'' Schur ring is then the coarsest partition, with basic sets $\{1\}$ and $\Gamma - 1$    

The smallest (coarsest) Schur ring has the largest automorphism group, that is, the symmetric group $S_n$, where $n$ is the order of the group. The largest Schur ring has the smallest automorphism group, that is,  the regular action of the group on itself.

\vspace{4mm}

$\mathbf{ Graphical\ Regular\ Representation}$ Let $\Gamma$ be a finite group and let $G$ be a graph. If $\Aut(G) \equiv \Gamma$ and acts regularly on $V(G)$ then we say that $G$ is a graphical regular representation (GRR)of $\Gamma$.

The question asking which finite groups have at least one GRR has been completely solved \cite{Go, He, Im, Im2, ImWa, ImWa2, NoWa, NoWa2}. Specifically:

\begin{enumerate}
	\item The only abelian groups which have a GRR are $\mathbb{Z}^n_2$ for $n\geq5$.
	
	\item Except for generalised dicyclic groups and a finite number of known groups, all non-abelian groups have a GRR.
\end{enumerate}

However, it remains a challenge to discern whether a group known to have GRRs has GRRs with specific properties, such as being trivalent. This question is the focus of this note, specifically for dihedral groups.

\vspace{2mm}

Below are some results we will be using: 

\vspace{4mm}

\begin{thm}\label{thm:aut<<C>>}
	$\Aut(\Cay(\Gamma,C))=\Aut(\langle\langle C \rangle\rangle )$ \textnormal{\cite{MuPo}}
\end{thm}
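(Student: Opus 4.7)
The plan is to prove the two inclusions separately. For $\Aut(\langle\langle C \rangle\rangle) \subseteq \Aut(\Cay(\Gamma,C))$, I would argue as follows. Since $\overline{C}$ lies in $\langle\langle C \rangle\rangle$ and every element of a Schur ring has a unique non-negative integer expansion in the basis, the set $C$ must be a disjoint union $B_{i_1}\cup\cdots\cup B_{i_k}$ of basic sets of $\langle\langle C \rangle\rangle$. Hence the edge set of $\Cay(\Gamma,C)$ on the vertex set $\Gamma$ is exactly the union of the edge sets of the basic Cayley graphs $\Cay(\Gamma,B_{i_j})$. A permutation in $\Aut(\langle\langle C \rangle\rangle)$ is by definition in the intersection of the $\Aut(\Cay(\Gamma,B_{i_j}))$, so it preserves each of those edge sets, hence their union, and therefore belongs to $\Aut(\Cay(\Gamma,C))$.

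The reverse inclusion $\Aut(\Cay(\Gamma,C)) \subseteq \Aut(\langle\langle C \rangle\rangle)$ is the substantive direction, and my plan is to manufacture a Schur ring directly out of $G := \Aut(\Cay(\Gamma,C))$ via the classical Schur--Wielandt construction. Since $\Cay(\Gamma,C)$ is vertex-transitive under the regular action of $\Gamma$ on itself, $G$ contains a regular copy of $\Gamma$. Let $G_1$ be the stabiliser in $G$ of the identity $1\in\Gamma$. The fundamental theorem of Schur says that the orbits of $G_1$ acting on $\Gamma$ form the basic sets of a Schur ring $\mathcal{T}$ over $\Gamma$, and that every element of $G$ is an automorphism of each basic Cayley graph of $\mathcal{T}$, so $G \subseteq \Aut(\mathcal{T})$. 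Moreover $C$ is the neighbour set of $1$ in $\Cay(\Gamma,C)$, so it is preserved setwise by $G_1$ and is therefore a union of $G_1$-orbits; hence $\overline{C}\in\mathcal{T}$.

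To conclude I would invoke the defining property of $\langle\langle C \rangle\rangle$ as the coarsest Schur ring containing $\overline{C}$. Since $\mathcal{T}$ is a Schur ring containing $\overline{C}$, the partition of $\Gamma$ given by $\mathcal{T}$ refines the one given by $\langle\langle C \rangle\rangle$; in other words, every basic set of $\langle\langle C \rangle\rangle$ is a union of basic sets of $\mathcal{T}$. Consequently every automorphism of $\mathcal{T}$ preserves each basic set of $\langle\langle C \rangle\rangle$ and hence each of its basic Cayley graphs, giving $\Aut(\mathcal{T}) \subseteq \Aut(\langle\langle C \rangle\rangle)$. Chaining $G \subseteq \Aut(\mathcal{T}) \subseteq \Aut(\langle\langle C \rangle\rangle)$ completes the second inclusion.

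The main obstacle is the Schur--Wielandt step asserting that the $G_1$-orbits are actually closed under the multiplication of $\mathbb{Z}[\Gamma]$ and thus yield a bona fide Schur ring; the partition, inverse-closure and identity axioms are immediate from elementary orbit considerations, but multiplicative closure requires the standard double-counting argument showing that the structure constants $\beta_{ij}^{k}$ are well defined on orbits of $G_1$. In a self-contained write-up this is where the work lies, but for the present note it suffices to cite the Muzychuk--Ponomarenko reference, which packages exactly this content.
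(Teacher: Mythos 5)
The paper does not prove this statement at all: it is quoted as a known result and attributed to Muzychuk--Ponomarenko, with the proof explicitly omitted. So there is no in-paper argument to compare against; what you have written is a reconstruction of the standard proof, and it is sound. The easy inclusion is handled correctly: $\overline{C}\in\langle\langle C\rangle\rangle$ forces $C$ to be a union of basic sets, the edge set of $\Cay(\Gamma,C)$ is then the union of the edge sets of the corresponding basic Cayley graphs, and anything preserving each of these preserves their union. For the hard inclusion you correctly reduce everything to Schur's theorem on transitivity modules: the orbits of the point stabiliser $G_1$ of $G=\Aut(\Cay(\Gamma,C))$ form a Schur ring $\mathcal{T}$ whose basic Cayley graphs are exactly the orbital (di)graphs of $G$, so $G\le\Aut(\mathcal{T})$; and since $\overline{C}\in\mathcal{T}$, minimality of $\langle\langle C\rangle\rangle$ gives $\langle\langle C\rangle\rangle\subseteq\mathcal{T}$ and hence $\Aut(\mathcal{T})\subseteq\Aut(\langle\langle C\rangle\rangle)$ by the same union-of-arc-sets argument. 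You are also right to flag that the only genuinely nontrivial ingredient is the multiplicative closure of the transitivity module, which is precisely what the cited reference supplies. Two small points of wording: in the last step, what an automorphism of $\mathcal{T}$ preserves is the arc set of each basic Cayley graph of $\langle\langle C\rangle\rangle$ (via the union argument), not the basic sets themselves as subsets of the vertex set, so phrase that step accordingly; and since basic sets need not be symmetric or generating, the basic Cayley graphs should be treated as directed graphs, a wrinkle the paper itself glosses over.
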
 


\begin{thm}\label{thm:trivial<<C>>}
If $\langle \langle C \rangle \rangle$ is trivial then $Cay(\Gamma,C)$ is a GRR of $\Gamma$.   \textnormal{\cite{Wei}}
\end{thm}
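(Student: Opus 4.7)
The plan is to combine Theorem \ref{thm:aut<<C>>} with the structural remark made earlier in the preamble about the automorphism group of the ``largest'' Schur ring. Essentially the statement should fall out as a one-line corollary, so the real task is to justify that the hypothesis ``$\langle\langle C \rangle\rangle$ is trivial'' puts us into the situation where that remark applies.

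First I would unpack what ``trivial'' means in the sense intended by the paper. Looking back at the discussion of the smallest and largest Schur rings, the trivial Schur ring here must be the finest (largest) one, whose basic sets are the singletons of $\Gamma$; the coarsest ring cannot be what is meant because its automorphism group is all of $S_n$, which would never force a GRR. Next I would apply Theorem \ref{thm:aut<<C>>} to obtain $\Aut(\Cay(\Gamma,C))=\Aut(\langle\langle C \rangle\rangle)$. Finally I would invoke the fact quoted in the preamble that the automorphism group of the largest Schur ring of $\Gamma$ is precisely the regular action of $\Gamma$ on itself. Chaining these two identifications gives $\Aut(\Cay(\Gamma,C))\equiv \Gamma$ acting regularly on the vertex set $\Gamma$, which is exactly the definition of a GRR given earlier.

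The only step that really requires any care is pinning down the convention for ``trivial'' and making sure the correspondence between ``basic sets are singletons'' and ``automorphism group is the regular action'' is recorded as a hypothesis of the theorem. Once that interpretation is fixed there is no genuine calculation to perform: the statement is simply the composition of two previously quoted facts, so I would expect the written proof to occupy at most a couple of sentences.
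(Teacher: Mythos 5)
Your derivation is correct. The paper itself gives no proof of this statement -- it is quoted from Weisfeiler \cite{Wei} with the proof explicitly omitted -- so there is nothing internal to compare against, but your argument is the standard one and it closes the gap using only facts already recorded in the preamble: you correctly pin down that ``trivial'' here must mean the \emph{largest} (finest) Schur ring, whose basic sets are singletons, since the coarsest ring has automorphism group $S_n$ and could never yield a GRR; you then chain Theorem \ref{thm:aut<<C>>}, $\Aut(\Cay(\Gamma,C))=\Aut(\langle\langle C \rangle\rangle)$, with the quoted fact that the automorphism group of the largest Schur ring is the regular action of $\Gamma$ on itself, which is precisely the definition of a GRR given earlier. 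The only point worth flagging is that the identification of $\Aut$ of the finest Schur ring with the regular action is itself a nontrivial fact (it amounts to computing the centralizer of the right regular representation via the basic Cayley graphs on singletons); you are entitled to cite it as stated in the preamble, but a fully self-contained proof would need to establish it rather than assume it.
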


\begin{thm}[Schur-Wielandt Principle]
Let $r=\sum z_\gamma \gamma$ be an element of a Schur ring $\mathcal{S}$ of a group $\Gamma$. Then, for any integer $k$, the sum $\sum_{z_\gamma=k} \gamma$ is also in $\mathcal{S}$. \textnormal{\cite{MuPo}}
\end{thm}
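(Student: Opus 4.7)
The plan is to leverage the uniqueness of the decomposition of any element of a Schur ring in terms of its basis, together with the fact that $\{B_0, B_1, \ldots, B_{r-1}\}$ is a partition of $\Gamma$. The essence of the result is that the coefficient $z_\gamma$ in any element $r \in \mathcal{S}$ cannot vary freely across $\Gamma$; it is forced to be constant on each basic set $B_i$, and once this is established the rest is bookkeeping.

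First I would invoke the defining property of a Schur ring to write $r = \sum_{i=0}^{r-1} \beta^i \overline{B}_i$ uniquely, with $\beta^i \geq 0$. Recalling that each basic element equals $\overline{B}_i = \sum_{\gamma \in B_i} \gamma$ and that every $\gamma \in \Gamma$ belongs to precisely one basic set, I would expand and compare the coefficients of individual group elements on both sides of the equality $\sum_\gamma z_\gamma \gamma = \sum_i \beta^i \sum_{\gamma \in B_i} \gamma$. This gives $z_\gamma = \beta^i$ whenever $\gamma \in B_i$, so the map $\gamma \mapsto z_\gamma$ is indeed constant on each basic set.

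Given this observation, the set $\{\gamma \in \Gamma : z_\gamma = k\}$ must be a union of basic sets, namely the union of those $B_i$ for which $\beta^i = k$. Consequently $\sum_{z_\gamma = k} \gamma = \sum_{i \,:\, \beta^i = k} \overline{B}_i$, which is a finite $\mathbb{Z}$-linear combination of basic elements with coefficients in $\{0,1\}$. Closure of $\mathcal{S}$ under addition then places this sum inside $\mathcal{S}$, which is precisely the conclusion.

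The main obstacle here is not really a mathematical difficulty but rather the careful translation between the ``coefficient-by-element'' formulation used in the statement of the Schur-Wielandt Principle and the ``basis decomposition'' formulation used in the definition of a Schur ring. Once this bridge is built, uniqueness of the decomposition forces the coefficient function to respect the partition, and the closure axiom delivers the result immediately; no manipulation of structure constants $\beta_{ij}^k$ or automorphism arguments are needed.
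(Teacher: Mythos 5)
The paper itself does not prove this statement; it explicitly omits the proofs of the quoted background results and defers to the cited source \cite{MuPo}, so there is no in-paper argument to compare against. Your proof is the standard one and it is correct: by the second axiom in the paper's definition, the element $r$ has a unique representation $r=\sum_{i}\beta^{i}\overline{B}_{i}$, and since $\{B_{0},\dots,B_{r-1}\}$ partitions $\Gamma$, comparing coefficients of individual group elements forces $z_{\gamma}=\beta^{i}$ for every $\gamma\in B_{i}$; hence $\{\gamma : z_{\gamma}=k\}=\bigcup_{i\,:\,\beta^{i}=k}B_{i}$ and $\sum_{z_{\gamma}=k}\gamma=\sum_{i\,:\,\beta^{i}=k}\overline{B}_{i}\in\mathcal{S}$ by closure under addition. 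Two small points worth making explicit: if no $\beta^{i}$ equals $k$ the sum is empty and equals $0\in\mathcal{S}$, which is still consistent with the claim; and your argument does not depend on the paper's (nonstandard) restriction $\beta^{i}\geq 0$ --- it goes through verbatim for arbitrary integer coefficients, which is the setting in which the Schur--Wielandt principle is usually stated and in which $\mathcal{S}$ is actually a ring.
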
   

\vspace{8mm}

\subsection{Main Results}


Our main result is the following:


\begin{thms}\label{thm:main}
Let $n$ be an odd integer greater than 5 and let $r$, $s$, and $t$ be integers less than $n$ such that the difference of any two of them is relatively prime to $n$. If $3r+s=4t \bmod n$, then $\Cay(D_n, \{ab^r, ab^s, ab^t\})$ is a GRR of $D_n$.
\end{thms}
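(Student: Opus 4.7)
The plan is to invoke Theorem \ref{thm:trivial<<C>>}: to prove that $\Cay(D_n,S)$ with $S=\{ab^r,ab^s,ab^t\}$ is a GRR, it suffices to show the Schur ring $\langle\langle \overline{S}\rangle\rangle$ generated by $\overline{S}=ab^r+ab^s+ab^t$ is the finest (``trivial'') one on $D_n$, i.e.\ every singleton of $D_n$ occurs as a basic set. Throughout I would exploit the identities $(ab^i)(ab^j)=b^{j-i}$ and $(ab^i)b^j=ab^{i+j}$, which follow from $aba=b^{-1}$, and repeatedly apply the Schur-Wielandt principle to peel off basic quantities from products.

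The first step is to compute $\overline{S}^{\,2}$. Since every element of $S$ is an involution,
\[
\overline{S}^{\,2} \;=\; 3\cdot\overline{1} + \sum_{\substack{x,y\in\{r,s,t\}\\ x\neq y}} b^{y-x}.
\]
Setting $d=t-r$, the hypothesis $3r+s\equiv 4t\pmod{n}$ rearranges to $s-t\equiv 3d$ and therefore $s-r\equiv 4d$, so
\[
\overline{S}^{\,2} - 3\cdot\overline{1} \;=\; b^{d}+b^{-d}+b^{3d}+b^{-3d}+b^{4d}+b^{-4d}
\]
lies in $\langle\langle\overline{S}\rangle\rangle$. Note also that ``each pairwise difference of $r,s,t$ is coprime to $n$'' combined with $n$ odd forces $\gcd(d,n)=\gcd(3,n)=1$; these coprimality conditions will be used repeatedly.

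The next step is to form further products in $\langle\langle\overline{S}\rangle\rangle$: for example $\overline{S}\cdot(\overline{S}^{\,2}-3\cdot\overline{1})$ and $(\overline{S}^{\,2}-3\cdot\overline{1})^{2}$. In the first product each summand has the shape $ab^{r+jd}$ with $j$ ranging over an $18$-element multiset of integers drawn from $\{-4,\ldots,8\}$; applying Schur-Wielandt separates these by coefficient, and after subtracting the already-known $\overline{S}$ from the coefficient-two piece one obtains a new three-element simple quantity in the reflection coset $a\langle b\rangle$. The second product yields analogous simple quantities inside $\langle b\rangle$. Iterating this procedure — multiplying by $\overline{S}$, by $\overline{S}^{\,2}-3\cdot\overline{1}$, or by a previously-extracted basic quantity, and then invoking Schur-Wielandt to split coefficients — progressively refines the basis of $\langle\langle\overline{S}\rangle\rangle$.

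The main obstacle will be to prove that this refinement terminates at the partition into singletons for \emph{every} admissible $n$. For sufficiently large $n$ the arithmetic variety of exponents generated by successive convolutions should force each residue modulo $n$ eventually to be isolated; but at the smallest admissible values (the smallest being $n=7$, since $\gcd(s-t,n)=1$ with $s-t\equiv 3d$ already rules out $3\mid n$) some exponents coincide modulo $n$ already in the first product, and a direct finite verification will be needed to show the iteration still bottoms out at singletons. Once every singleton $\{b^{j}\}$ has been shown to be a basic set, multiplying by $\overline{S}$ immediately produces every singleton $\{ab^{j}\}$, so $\langle\langle\overline{S}\rangle\rangle$ is trivial and Theorem \ref{thm:trivial<<C>>} gives the claimed GRR.
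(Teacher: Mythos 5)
Your opening computation is correct and matches the paper's starting point: with $d=t-r$ the hypothesis $3r+s\equiv 4t\pmod n$ gives pairwise differences $\pm d,\pm 3d,\pm 4d$, so $\overline{S}^{\,2}-3\cdot\overline{1}=b^{d}+b^{-d}+b^{3d}+b^{-3d}+b^{4d}+b^{-4d}$ lies in $\langle\langle\overline{S}\rangle\rangle$, and your observation that $\gcd(3,n)=1$ is forced is also right. But from there your strategy --- iterate convolutions and Schur--Wielandt until the partition refines to singletons --- leaves the entire substance of the theorem unproven. The claim that ``the arithmetic variety of exponents \ldots should force each residue modulo $n$ eventually to be isolated,'' supplemented by an unspecified ``direct finite verification'' for small $n$, is not an argument: you never exhibit a single new basic quantity (the asserted ``new three-element simple quantity in the reflection coset'' is not verified, and for general $n$ the coefficient bookkeeping in $\overline{S}\cdot(\overline{S}^{\,2}-3\cdot\overline{1})$ could fail to separate anything if exponents collide), and you give no mechanism guaranteeing the refinement terminates at the discrete partition rather than stabilising at some intermediate Schur ring. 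Proving triviality head-on for all admissible $n$ is precisely the hard part, and the proposal defers it entirely.

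The paper takes a genuinely different and much shorter route that sidesteps this difficulty: it invokes Lemma \ref{lm2}, which says that under the stated coprimality and non-degeneracy conditions the Schur ring $\langle\langle ab^r+ab^s+ab^t\rangle\rangle$ is trivial \emph{provided only that} $ab^r+ab^s+ab^t$ is not itself a basic set. So all that remains is a single non-membership claim, proved by contradiction: assuming $\overline{S}$ is basic, the element $B_6=\overline{S}^{\,2}-3\cdot\overline{1}$ decomposes into basic sets; squaring $B_6$ shows $b^{2d}+b^{-2d}$ carries coefficient $3$ while everything else carries a different coefficient, so by Schur--Wielandt either $b^{2d}$ is a singleton basic set or $b^{2d}+b^{-2d}$ is basic; the first case isolates all powers of $b$ (as $\gcd(2d,n)=1$) and multiplying $\overline{S}$ by $b^{3(t-r)}$ forces $r=t$, while the second case feeds into Lemma \ref{lm1} to put $b^{d}+b^{-d}$ in the basis and multiplying $\overline{S}$ by it again forces $r=t$. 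If you want to salvage your direct approach you would need to supply the missing termination argument, which in effect means reproving Lemma \ref{lm2}; otherwise the reduction through that lemma is the step your proposal is missing.
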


To prove this theorem however, we will need the help of the following two lemmas. We include only the proof of the first one as the second lemma is proven in the main work on arXiv.


\begin{lems} \label{lm1}
Let $\gamma$ be an element of $\Gamma$ such that $\gamma$ has odd order $n$ and let $r$ and $t$ be two integers such that $r-t$ is relatively prime to $n$. If $\gamma^r + \gamma^t$ is an element of some $\mathcal{B}[\mathcal{S}_\Gamma]$ then  $\gamma^s + \gamma^{-s}$ is also an element of that same $\mathcal{B}[\mathcal{S}_\Gamma]$ for all integers $s$ relatively prime to $n$.
\end{lems}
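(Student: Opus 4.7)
I would proceed in two stages: first convert the given basic element $B = \gamma^r + \gamma^t$ into a symmetric basic element $\gamma^{r-t} + \gamma^{t-r}$, then use multiplicative closure of $\mathcal{S}$ together with the coprimality of $r-t$ to $n$ to propagate this to every $\gamma^s + \gamma^{-s}$ with $\gcd(s,n)=1$. Throughout, the recurring subtlety is upgrading ``lies in $\mathcal{S}$'' (which the Schur-Wielandt Principle delivers cheaply) to ``is a basic element.''

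\textbf{Stage 1.} Because $B$ is basic, so is $B^{-1} = \gamma^{-r} + \gamma^{-t}$, and hence
\[
B\cdot B^{-1} \;=\; 2\cdot 1 + \gamma^{r-t} + \gamma^{t-r} \;\in\; \mathcal{S}.
\]
Applying the Schur-Wielandt Principle to the coefficient-$1$ part yields $\gamma^{r-t} + \gamma^{t-r}\in\mathcal{S}$; the two group elements are distinct since $n$ is odd and $\gcd(r-t,n)=1$. To see that this element is actually basic, I rule out the only other compatible decomposition, namely $\{\gamma^{r-t}\}$ and $\{\gamma^{t-r}\}$ each being singleton basic sets. If $\{\gamma^{r-t}\}$ were basic then $\gamma^{r-t}\in\mathcal{S}$, so every power $\gamma^{m(r-t)}$ would also lie in $\mathcal{S}$ as a singleton basic set; choosing $m$ with $m(r-t)\equiv r\pmod{n}$ (possible because $\gcd(r-t,n)=1$) would make $\{\gamma^r\}$ basic, contradicting $\gamma^r\in B = \{\gamma^r,\gamma^t\}$, a basic set of size $2$.

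\textbf{Stage 2 and the hard step.} Set $\delta = \gamma^{r-t}$, which generates $\langle\gamma\rangle$, and let $D_j := \delta^j + \delta^{-j}$, so Stage 1 gives $D_1$ basic. A direct expansion in $\mathbb{Z}[\Gamma]$ verifies the Chebyshev-type recurrence $D_1 D_j = D_{j+1} + D_{j-1}$, which remains valid even when some of the four monomials coincide modulo $n$. Combined with $D_0 = 2$, this shows $D_j\in\mathbb{Z}[D_1]\subseteq \mathcal{S}$ for every $j$. Now fix $j$ with $\gcd(j,n)=1$: the elements $\delta^{\pm j}$ are distinct (as $n$ is odd), so $D_j$ is a disjoint union of basic sets contained in $\{\delta^j,\delta^{-j}\}$, leaving only two options - one basic set of size $2$, or two singleton basic sets. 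Suppose the latter; then $\delta^j\in\mathcal{S}$, and taking $m$ with $mj\equiv 1\pmod{n}$ forces $\delta = \delta^{mj}\in\mathcal{S}$ as a singleton basic set, contradicting $\delta\in\{\delta,\delta^{-1}\}$ (a basic set of size $2$). Hence $D_j$ is basic. Given $s$ with $\gcd(s,n)=1$, the integer $j\equiv s(r-t)^{-1}\pmod{n}$ is also coprime to $n$, and $\gamma^s + \gamma^{-s} = D_j$ is basic, as required. The main obstacle throughout is the promotion from containment in $\mathcal{S}$ to basic-element status; in both stages the leverage is the same, namely that a hypothetical singleton basic set $\{\delta^k\}$ propagates under multiplication to singletons $\{\delta^{mk}\}$, and coprimality with $n$ then lands the contradiction on a group element already known to inhabit a $2$-element basic set.
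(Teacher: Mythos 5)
Your proof is correct, and its endgame coincides with the paper's (the Schur--Wielandt principle plus the observation that a singleton basic set $\{\gamma^{k}\}$ with $k$ coprime to $n$ would force every power of $\gamma$, in particular $\gamma^r$, to be a singleton, contradicting that $\{\gamma^r,\gamma^t\}$ is a basic set of size two), but you reach the symmetric sums by a genuinely different computation. The paper expands $(\gamma^r+\gamma^t)^n$ binomially, uses $\gamma^{tn}=1$ to rewrite it as $\sum_{i=0}^{n}\binom{n}{i}\gamma^{(r-t)i}$, and extracts each pair $\gamma^{(r-t)i}+\gamma^{(t-r)i}$ from this single ring element via the symmetry $\binom{n}{i}=\binom{n}{n-i}$; you instead form $B\cdot B^{-1}$ to place $D_1=\gamma^{r-t}+\gamma^{t-r}$ in $\mathcal{S}$ and then propagate with the Chebyshev-type recurrence $D_1D_j=D_{j+1}+D_{j-1}$. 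Your version is somewhat cleaner: isolating each pair in the paper's expansion by its coefficient tacitly requires the binomial coefficients $\binom{n}{i}$ for $0\le i\le (n-1)/2$ to be pairwise distinct, a point the paper does not make explicit, whereas your recurrence needs only closure of $\mathcal{S}$ under multiplication and subtraction (the latter holds because $\mathcal{S}$ is a subring of $\mathbb{Z}[\Gamma]$, notwithstanding the paper's phrasing of the basis representation with non-negative coefficients, so your step $D_{j+1}=D_1D_j-D_{j-1}$ is legitimate). Both arguments conclude identically by transporting an arbitrary $s$ coprime to $n$ back through the unit $r-t$ modulo $n$.
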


\begin{proof}
First we observe that since $\mathcal{S}_\Gamma$ is closed then $(\gamma^r + \gamma^t)^n$ is in $\mathcal{S}_{\Gamma}$ whenever $(\gamma^r + \gamma^t)$ is in $\mathcal{S}_{\Gamma}$. We can use binomial expansion to get:

$(\gamma^r+\gamma^ t)^n = \displaystyle\sum_{i=0}^{n}[\binom{n}{i}\gamma^{ri}\gamma^{ t(n-i)}]$ which can be re-written as $\displaystyle\sum_{i=0}^{n}[\binom{n}{i}\gamma^{(r- t)i)}]\gamma^{tn}$. However, $\gamma$ is of order $n$ and so $\gamma^{tn} \equiv 1$. Therefore, the sum can once more be rewritten as $\displaystyle\sum_{i=0}^{n}[\binom{n}{i}\gamma^{(r- t)i)}]$.

\medskip
Since $r- t$ is relatively prime to $n$ then the summation $\displaystyle\sum_{i=0}^{n}[\binom{n}{i}\gamma^{(r- t)i)}]$ must include exactly $n$ terms. In other words every power of $\gamma$ appears in that sum once and only once. Therefore, the coefficient of every $\gamma^{(r-t)i}$ is exactly $\binom{n}{i}$. Moreover, we observe that $\binom{n}{i} = \binom{n}{n-i}$ and so we can re-write this summation as $\displaystyle\sum_{i = 0}^{\frac{n}{2}}[\binom{n}{i}(\gamma^{(r- t)i}+\gamma^{(t-r)i})]$.

\medskip
This means that for every $i \in [0, \frac{n}{2}]$ either $\gamma^{(r-t)i}$ occurs as a singleton in $\mathcal{B}[\mathcal{S}_\Gamma]$ or $\gamma^{(r-t)i}+\gamma^{(t-r)i}$ occurs in $\mathcal{B}[\mathcal{S}_\Gamma]$ due to the Schur-Wielandt principle.

However, in the cases where $i$ is relatively prime to $n$, $(r-t)i$ would also be relatively prime to $n$, and so $\gamma^{(r-t)i}$ can generate every power of $\gamma$. This would mean that should $\gamma^{(r-t)i}$ be a singleton element of the basis, then every power of $\gamma$ would be a singleton element of the basis also. However, $\gamma^r + \gamma^t$ is an element of $\mathcal{B}[\mathcal{S}_\Gamma]$, and so this would imply a contradiction. Therefore for every $i \in [0, \frac{n}{2}]$ which is relatively prime to $n$, we have $\gamma^{(r-t)i}+\gamma^{(t-r)i} \in \mathcal{B}[\mathcal{S}_\Gamma]$.

Let us recapitulate that at this point we have shown that if $n$ is the order of some $\gamma \in \Gamma$, $r$ and $t$ are two integers such that $(r-t)$ are relatively prime to $n$ and $(\gamma^r + \gamma^t)$ is in $\mathcal{B}[\mathcal{S}_\Gamma]$ then $\displaystyle\sum_{i = 0}^{\frac{n}{2}}[\binom{n}{i}(\gamma^{(r- t)i}+\gamma^{(t-r)i})]$ is in $\mathcal{S}_{\Gamma}$ and that those pairs of summands of the form $\gamma^{(r- t)i}+\gamma^{(t-r)i}$ appear as a basis element in $\mathcal{B}[\mathcal{S}_\Gamma]$ when $i$ is relatively prime to $n$. All that is left to show is that every number less than $n$ which is also relatively prime to it appears in the expression $\displaystyle\sum_{i = 0}^{\frac{n}{2}}[\binom{n}{i}(\gamma^{(r- t)i}+\gamma^{(t-r)i})]$ as a power of $\gamma$ (or, equivalently, a value of $(r-t)i$) and this will complete the proof.

Consider the fact that $\displaystyle\sum_{i = 0}^{\frac{n}{2}}[\binom{n}{i}(\gamma^{(r- t)i}+\gamma^{(t-r)i})] \equiv \displaystyle\sum_{i = 0}^{\frac{n}{2}}[\binom{n}{i}\gamma^{(r- t)i}] + \displaystyle\sum_{i = \frac{n}{2}+1}^{n}[\binom{n}{i}\gamma^{(r- t)i}]$.

Therefore, we can once again re-write our summation as $\displaystyle\sum_{i \in [n]}[\binom{n}{i} \gamma^{(r- t)i}]$ where $[n]$ is the set $\{0,1,2,\dots,n \}$, and re-write again to 

$\displaystyle\sum_{j \in (r-t)[n]}[\binom{n}{i} \gamma^{j}]$

where $(r-t)[n]$ is the set $\{(r-t)i$ mod $n : i \in [n]\}$.

However, we now note that the set $(r-t)[n]$ is in fact equivalent to $[n]$. This is because the modulo multiplication of a set can be represented as a permutation of a set unto itself, which is a bijective function. Therefore:

$\displaystyle\sum_{i \in [n]}[\binom{n}{i} \gamma^{(r- t)i}] \equiv \displaystyle\sum_{j \in [n]}[\binom{n}{i} \gamma^j]$

Obviously every number less than $n$ which is less relatively prime to it appears as a power of $\gamma$ on the right hand side above, meaning that those powers of $\gamma$ also appear on the left hand side above.

This means that every $\gamma^{(r-t)i}$ such that $(r-t)i$ is relatively prime to $n$ appears in the summation $\displaystyle\sum_{i = 0}^{\frac{n}{2}}[\binom{n}{i}(\gamma^{(r- t)i}+\gamma^{(t-r)i})]$ and as we stated above, this completes the proof.

\end{proof}

\begin{lems} \label{lm2}
Let $n$ be an odd integer and let $a$ and $b$ be the generators of the group $D_n$ with orders 2 and $n$ respectively. Also let $r$, $s$ and $t$ be unique integers less than or equal to $n$. The Schur ring $\langle\langle ab^r+ab^s+ab^t\rangle\rangle $ must be trivial if the following are true:

\begin{enumerate}

\item $ab^r+ab^s+ab^t \not \in \mathcal{B}[\langle\langle ab^r+ab^s+ab^t\rangle\rangle ]$
\item The absolute value of the difference between any two of $r$, $s$ and $t$ is relatively prime to $n$.
\item The sum of any two of $r$, $s$ and $t$ is not equal to twice the third variable taken modulo $n$.

\end{enumerate}
\end{lems}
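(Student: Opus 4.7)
The plan is to show that every basic set of $\mathcal{S}:=\langle\langle ab^r+ab^s+ab^t\rangle\rangle$ is a singleton, which is what ``trivial'' means in the paper. Since $\overline{\{ab^r,ab^s,ab^t\}}\in\mathcal{S}$ and basic sets partition $D_n$, the set $\{ab^r,ab^s,ab^t\}$ must itself be a disjoint union of basic sets; hypothesis (1) rules out its being a single basic set, so the only remaining possibilities are (a) each of $\{ab^r\},\{ab^s\},\{ab^t\}$ is a singleton basic set, or (b) after relabeling, $\{ab^r,ab^s\}$ is a basic set and $\{ab^t\}$ is a singleton. I would then dispose of these two cases separately.

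In case (a), multiplicative closure gives $\overline{\{ab^r\}}\cdot\overline{\{ab^s\}}=\overline{\{b^{s-r}\}}$ from the $D_n$ identity $ab^r\cdot ab^s=b^{s-r}$, and because this is a single-element simple quantity and structure constants are non-negative integers summing (weighted by basic set size) to $1\cdot 1=1$, it must itself be a basic set. Iterating, $\{b^{k(s-r)}\}$ is basic for every $k$, and since hypothesis (2) gives $\gcd(s-r,n)=1$, every $\{b^j\}$ is a singleton basic set. Finally $\overline{\{b^{r-j}\}}\cdot\overline{\{ab^r\}}=\overline{\{ab^j\}}$ is a singleton basic set for every $j$, so every element of $D_n$ is its own basic set and $\mathcal{S}$ is trivial.

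In case (b) I would aim for a contradiction. The product $\overline{\{ab^t\}}\cdot\overline{\{ab^r,ab^s\}}$ equals the simple quantity $b^{r-t}+b^{s-t}\in\mathcal{S}$, so either $\{b^{r-t},b^{s-t}\}$ is a basic set or $\{b^{r-t}\}$ and $\{b^{s-t}\}$ are both singleton basic sets. In the first subcase Lemma \ref{lm1} applies with $\gamma=b$ and difference $(r-t)-(s-t)=r-s$ coprime to $n$, producing $b^{r-t}+b^{t-r}$ as a basic element; the two basic sets $\{b^{r-t},b^{s-t}\}$ and $\{b^{r-t},b^{t-r}\}$ both contain $b^{r-t}$, so they must coincide, which forces $r+s\equiv 2t\pmod n$, contradicting hypothesis (3). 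In the second subcase, $\{b^{r-t}\}$ is a singleton whose powers generate every $\{b^j\}$ as a singleton basic set (since $r-t$ is coprime to $n$); then $\overline{\{b^k\}}\cdot\overline{\{ab^r,ab^s\}}=\overline{\{ab^{r-k},ab^{s-k}\}}\in\mathcal{S}$ for every $k$. If this pair ever splits into singletons, further multiplication by the singletons $\{b^m\}$ makes every $\{ab^j\}$ a singleton basic set, contradicting $\{ab^r,ab^s\}$ being basic; if instead it is a basic set for every $k$, specialising $k=r$ and $k=s$ produces the two basic sets $\{a,ab^{s-r}\}$ and $\{a,ab^{r-s}\}$ both containing $a$, so they must coincide, forcing $b^{2(s-r)}=1$, which is impossible since $n$ is odd and $s-r$ is coprime to $n$.

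The two other sub-variants of case (b), where the basic pair is $\{ab^r,ab^t\}$ or $\{ab^s,ab^t\}$, are disposed of by the same argument after permuting the roles of $r,s,t$: the Lemma \ref{lm1} subcase produces $r+t\equiv 2s$ or $s+t\equiv 2r\pmod n$ respectively, both forbidden by hypothesis (3), and the ``all $b$-singletons'' subcase is verbatim the same. The part of the argument I expect to be most delicate is precisely this second subcase of (b), where the $b$-part is already fully split into singletons and Lemma \ref{lm1} is unavailable; the key observation is that translating the size-two set $\{ab^r,ab^s\}$ by the singletons $\{b^k\}$ produces too many putative size-two basic sets through $a$ to be consistent with any single basic set containing $a$, and the oddness of $n$ combined with hypothesis (2) then closes the argument.
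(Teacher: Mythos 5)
Your argument is correct, but note that this paper never actually gives a proof of Lemma \ref{lm2} --- it explicitly defers it to the companion arXiv work --- so there is no in-paper proof to compare against. Judged on its own terms, your case analysis is exhaustive and sound: hypothesis (1) forces $\{ab^r,ab^s,ab^t\}$ to split either into three singleton basic sets or into a pair plus a singleton; the three-singleton case yields triviality by translating singletons around (first generating all $\{b^j\}$ from $b^{s-r}$ via coprimality, then all $\{ab^j\}$); and the pair-plus-singleton case dies in both of its subcases, once via Lemma \ref{lm1} together with hypothesis (3) (the coincidence $s-t\equiv t-r$ is exactly $r+s\equiv 2t$), and once via the translation argument where $k=r$ and $k=s$ force $b^{2(s-r)}=1$, impossible for odd $n$ with $\gcd(s-r,n)=1$. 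This is entirely consonant with the toolkit the paper does exhibit in its proof of the main theorem (Schur--Wielandt dichotomies between ``isolated'' and ``paired'' elements, multiplying putative basic quantities together, and invoking Lemma \ref{lm1} to produce the symmetric pair $b^{u}+b^{-u}$), so your route is very likely close in spirit to the omitted proof. The only steps worth flagging as needing a word of justification are (i) that a simple quantity lying in $\mathcal{S}$ is necessarily a union of basic sets, which follows from the unique non-negative representation in the paper's definition, and (ii) the observation that hypothesis (3) is only needed in the subcase where $\{b^{r-t},b^{s-t}\}$ is itself basic --- both are fine as you have them.
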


As mentioned before, the proof for the second theorem will not be included here as it is included in the main work.

We can now begin on the proof of the main result. This proof is conceptually identical to the proof for the main theory in the main work.




\begin{proof}
We will prove Theorem \ref{thm:main} by showing how $ab^r+ab^s+ab^t$ cannot be an element of $\mathcal{B}[\langle \langle ab^r+ab^s+ab^t\rangle \rangle]$ and then use Lemma \ref{lm2} to meet the sufficient conditions of Theorem \ref{thm:trivial<<C>>} and so bring about the result.

We substitute $s$ with $4t-3r$ at this point for simplicity's sake. We will begin by assuming that $ab^r+ab^s+ab^{4t-3r}$ is in $\mathcal{B}[\langle \langle ab^r+ab^s+ab^{4t-3r}\rangle \rangle]$ and then show how this must imply a contradiction. We label this assumption as \textbf{\emph{(Asm)}} for further reference.

So let us begin by considering the following statement, which we label as \textbf{\emph{(i)}} for further reference:


\[(ab^r+ab^t+ab^{4t-3r})^2 = 3(1)+ b^{r-t}+b^{3(r-t)}+b^{4(r-t)}+b^{-(r-t)}+b^{-3(r-t)}+b^{-4(r-t)}\]

Assuming that \textbf{\emph{(Asm)}} is true then it must be possible to express \textbf{\emph{(i)}} as the sum of elements of $\mathcal{B}[\langle \langle ab^r+ab^s+ab^{4t-3r}\rangle \rangle]$. However, we shall show that doing so implies a contradiction which means that \textbf{\emph{(Asm)}} must be false.

We begin by noting that the group elements on the right hand side of \textbf{\emph{(i)}} must either be isolated in the basis of the Schur ring or be grouped with other elements also on the right hand side of \textbf{\emph{(i)}}.

For the sake of brevity let us denote the sum of the elements on the right hand side as $B_6$. We note now that $(B_6)^2$ is equal to


\begin{align*}
6(1) + 2B_6 + b^{6(r-t)}+b^{-6(r-t)}+b^{8(r-t)}+b^{-8(r-t)} + \\
2(b^{5(r-t)}+b^{-5(s-t)}+b^{7(r-t)}+b^{-7(r-t)})+3(b^{2(r-t)}+b^{-2(r-t)}).
\end{align*}

We notice that the total coefficient of $b^{2(r-t)}+b^{-2(r-t)}$ on the right hand side is 3 and so, by the Schur-Wielandt principle, either $b^{2(r-t)}$ and $b^{-2(r-t)}$ are isolated elements in the basis of the Schur ring or $b^{2(r-t)}+b^{-2(r-t)}$ is an element in the basis. Let us first consider the possibility that $b^{2(r-t)}$ is an isolated element.

Since $r-t$ is relatively prime to $n$ and $n$ is odd then $2(r-t)$ is also relatively prime to $n$. Therefore if $b^{2(r-t)}$ is isolated then all powers of $b$ are isolated. This implies that $b^{3(r-t)}$ must also be an isolated element. Since our assumption is that $ab^r+ab^s+ab^{4t-3r}$ is in $\mathcal{B}[\langle \langle ab^r+ab^s+ab^{3r-2s}\rangle \rangle]$ we can consider the product of $ab^r+ab^s+ab^{4t-3r}$ and $b^{3(r-t)}$, which must be in $\langle \langle ab^r+ab^s+ab^{4t-3r}\rangle \rangle$.


\[(ab^r+ab^t+ab^{4t-3r})b^{3(t-r)} \equiv ab^{3t-2r}+ab^{4t-3r}+ab^{7t-6r}.\]


Note how the term $ab^{4t-3r}$ appears above. Since we are assuming that $ab^r+ab^s+ab^{4t-3r}$ is in $\mathcal{B}[\langle \langle ab^r+ab^s+ab^{4t-3r}\rangle \rangle]$, $ab^r$ and $ab^t$ must appear too. There are only two ways this is possible:

\begin{itemize}
\item $3t-2r = t$ and $7t-6r = r$
\item $3t-2r = r$ and $7t-6r = s$
\end{itemize} 

All the equations above, however, imply that $r=t$ which is a contradiction. Therefore it is not possible that $b^{2(r-t)}$ is an isolated element. Let us consider now the second possibility that $b^{2(r-t)}+b^{-2(r-t)}$ is an element in the basis.

We note that the difference in powers between $b^{2(r-t)}$ and $b^{-2(r-t)}$ is $\pm4(r-t)$. Since $r-t$ is relatively prime to $n$ and $n$ is odd then $\pm4(r-t)$ is also relatively prime to $n$. Lemma \ref{lm1} therefore gives us that $b^{r-t} + b^{t-r}$ is in the basis of the Schur ring. Combining this with \textbf{\emph{(Asm)}} gives us that $(ab^r + ab^t + ab^{4t-3r})(b^{r-t}+b^{t-r})$ is in the Schur ring too. This expression expands to $ab^t +ab^r+ab^{2r-t}+ab^{2t-r}+ab^{3t-2r}+ ab^{5t-4r} $. Notably the terms $ab^t, ab^r$ appear here and due to \textbf{\emph{(Asm)}} this means that $ab^{4t-3r}$ must appear also. There are three ways this could happen:

\begin{itemize}
\item $2r-t=4t-3r$ gives us $r = t$ 
\item $2t-r=4t-3r$ gives us $r = t$ 
\item $3t-2r=4t-3r$ gives us $r = t$
\item $5t-4r=4t-3r$ gives us $r = t$
\end{itemize}

But all these ways imply $r=t$ which is a contradiction. Therefore it is also impossible that $b^{2(r-t)}+b^{-2(r-t)}$ is an element in the basis.

So we have seen that due to  \textbf{\emph{(Asm)}} either $b^{2(r-t)}$ and $b^{-2(r-t)}$ are isolated elements in the basis of the Schur ring or $b^{2(r-t)}+b^{-2(r-t)}$ is an element in the basis. However, we have also seen that both these possibilities imply contradictions. Therefore it must be the case that  \textbf{\emph{(Asm)}} is false.

This means that $ab^r+ab^s+ab^{3r-2s} \not \in \mathcal{B}[\langle\langle ab^r+ab^s+ab^{3r-2s}\rangle\rangle ]$. Lemma 2.2 gives us that this implies that $\langle\langle ab^r+ab^s+ab^t\rangle\rangle $ is trivial and so $\Aut(\langle \langle ab^r+ab^s+ab^t \rangle \rangle) \equiv D_n$ meaning that $\Cay(D_n, \{ ab^r,ab^s,ab^t \})$ is a GRR of $D_n$ as we saw in the first section.

Hence we have proven the theorem.
\end{proof}

\vspace{8mm}

\bibliographystyle{plain}

\end{document}